\newtheorem{thm}{Theorem}[section]
\newtheorem{rem}[thm]{Remark}
\newtheorem{cor}[thm]{Corollary}
\newtheorem{lem}[thm]{Lemma}
\newtheorem{conj}[thm]{Conjecture}
\begin{document}

\title[Lee Spectral Sequence and Unknotting Number]{The Lee Spectral Sequence, Unknotting Number, and the Knight Move Conjecture}
\author{Akram Alishahi}
\thanks{AA was supported by NSF Grant DMS-1505798}
\address{Department of Mathematics, Columbia University, New York, NY 10027}
\email{\href{mailto:alishahi@math.columbia.edu}{alishahi@math.columbia.edu}}

\author{Nathan Dowlin}
\address{Department of Mathematics, Columbia University, New York, NY 10027}
\email{\href{mailto:ndowlin@math.columbia.edu}{ndowlin@math.columbia.edu}}

\date{\today}

\begin{abstract}
We show that the page at which the Lee spectral sequence collapses gives a bound on the unknotting number, $u(K)$. In particular, for knots with $u(K) \le 2$, we show that the Lee spectral sequence must collapse at the $E_{2}$ page. An immediate corollary is that the Knight Move Conjecture is true when $u(K) \le 2$.
\end{abstract}
\maketitle

\section{Introduction}

In \cite{Kh-Intro}, Khovanov defined a bigraded knot invariant $H_{Kh}(K)$ which categorifies the Jones polynomial. This invariant comes in the form of a homology theory based on a planar diagram for a knot and the Frobenius algebra $\mathbb{Q}[X]/X^{2}=0$.

There is a basic structural theory about Khovanov homology known as the Knight Move Conjecture, which can be stated as follows:

\begin{conj}[Knight Move Conjecture, \cite{Kh-Intro}, \cite{BarN2}]

The Khovanov homology of any knot $K$ decomposes as a single `pawn move' pair 

\[ \mathbb{Q}\{ 0, n-1 \} \oplus \mathbb{Q}\{ 0, n+1 \} \]

\noindent
together with a set of knight move pairs 

\[ \bigoplus_{i} \mathbb{Q}\{ l_{i}, m_{i} \} \oplus \mathbb{Q}\{ l_{i}+1, m_{i}+4 \} \]

\end{conj}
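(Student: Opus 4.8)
The plan is to translate the conjecture completely into a statement about the Lee spectral sequence. Lee's deformation replaces the Frobenius algebra $\mathbb{Q}[X]/(X^2)$ by $\mathbb{Q}[X]/(X^2-1)$, producing a filtered complex whose total homology is $\mathbb{Q}^2$ for a knot, concentrated in homological degree $0$; the associated spectral sequence has $E_1$-page $H_{Kh}(K)$ and converges to this two-dimensional Lee homology. The key numerical fact is that the differential $d_r$ on the $E_r$-page has bidegree $(1, 4r)$ in the (homological, quantum) bigrading, so $d_1$ carries $\mathbb{Q}\{l, m\}$ to $\mathbb{Q}\{l+1, m+4\}$ --- exactly the shape of a single knight move pair. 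My first step is therefore to record the clean reformulation: the Knight Move Conjecture holds for $K$ if and only if the Lee spectral sequence of $K$ collapses at its $E_2$-page, because in that case every class other than the $E_\infty$-survivors is cancelled by $d_1$, i.e. in knight move pairs.

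Given this reformulation, I would next pin down the pawn move pair as the pair of surviving generators. Here I invoke the analysis of the two canonical Lee classes: they survive to $E_\infty$, sit in homological degree $0$, and occupy quantum filtration levels $s(K)\pm 1$. This identifies the surviving $\mathbb{Q}\{0, n-1\}\oplus\mathbb{Q}\{0, n+1\}$ with $n = s(K)$ and leaves nothing further to prove about the pawn pair. All of the conjecture's content is thereby concentrated in a single assertion: that the remaining generators of $H_{Kh}(K)$ are \emph{all} destroyed already by $d_1$, with no surviving higher differentials.

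The hard part is exactly this last assertion, and I do not expect it to follow from formal considerations. The bigrading permits nonzero differentials $d_r$ of bidegree $(1, 4r)$ for every $r \ge 2$, and such a $d_r$ would cancel a pair $\mathbb{Q}\{l, m\}\oplus\mathbb{Q}\{l+1, m+4r\}$ that is not a knight move; nothing purely algebraic rules this out. The strategy I would pursue is to bound the collapse page by a geometric complexity of $K$: a crossing change is realized by a cobordism, and naturality of the Lee spectral sequence under the induced maps should let one compare the differentials of $K$ with those of a simpler diagram, so that an unknotting sequence of length $u(K)$ forces the higher differentials to vanish beyond a page controlled by $u(K)$. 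This is precisely the mechanism that settles the small unknotting number cases. For a general knot, however, this bound degrades as $u(K)$ grows and does not return collapse at $E_2$; closing that gap uniformly over all knots is the genuine obstacle, and I would regard it as requiring essentially new input. Indeed, one should remain alert to the possibility that some knot supports a nonzero higher Lee differential, in which case the statement as worded cannot hold in full generality.
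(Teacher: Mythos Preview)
The statement you are addressing is a \emph{conjecture}, not a theorem, and the paper does not prove it in full generality; it only establishes the special case $u(K)\le 2$ as a corollary of its unknotting-number bound. There is therefore no ``paper's own proof'' to compare against. Your proposal is not a proof either, and you correctly recognize this: you reduce the conjecture to collapse of the Lee spectral sequence at $E_2$ (exactly the reformulation the paper records), identify the pawn pair via the Rasmussen $s$-invariant, and then acknowledge that forcing $d_r=0$ for all $r\ge 2$ is the genuine content, which you do not claim to establish.

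The partial strategy you sketch in your third paragraph---bounding the collapse page by the unknotting number via crossing-change maps---is precisely the mechanism the paper develops to settle the case $u(K)\le 2$, so on that count your outline matches the paper's approach. Your closing caveat, that one should remain alert to a nonzero higher Lee differential, turned out to be the right instinct: Manolescu and Marengon subsequently produced a counterexample to the Knight Move Conjecture, so no proof of the general statement can exist.
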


\noindent
where $\mathbb{Q}\{i,j\}$ denotes a generator in bigrading $(i,j)$.

In \cite{LeeSS}, Lee defined a deformation of Khovanov homology by changing the Frobenius algebra to $\mathbb{Q}[X]/(X^{2}=1)$. The corresponding complex can be viewed as the original complex with additional differentials, resulting in a spectral sequence $E_{n}(K)$ from Khovanov homology to Lee homology. Lee showed that this spectral sequence is an invariant of the knot $K$, and that it converges to $\mathbb{Q} \oplus \mathbb{Q}$. Rasmussen~\cite{Ras-sg} added to this result that $E_{\infty}(K) = \mathbb{Q}\{ 0, s-1 \} \oplus \mathbb{Q}\{ 0, s+1 \} $, where $s$ is Rasmussen's slice invariant.

The differential $d_{n}$ on $E_{n}(K)$ has bigrading $(1, 4n)$, so the Knight Move Conjecture is true whenever the Lee spectral sequence collapses at the $E_{2}$ page.

In this paper, we construct a lower bound for the unknotting number $u(K)$ using Lee's homology theory, and we apply this bound to prove that the Lee spectral sequence must collapse at the $E_{2}$ page whenever $u(K) \le 2$. 

We technically use a lift of Lee's complex obtained by setting each $X^{2}=t$ as described in \cite{Kh-Frobext}. The resulting homology $H_{Lee}(K)$ is a module over $\mathbb{Q}[X, t]/(X^{2}=t)$, and it consists of two towers $\mathbb{Q}[t] \oplus \mathbb{Q}[t]$ and an $X$-torsion summand $T_{X}(H_{Lee}(K))$. Note that since $X^{2}=t$, $X$-torsion and $t$-torsion are the same, i.e. $T_{X}(H_{Lee}(K))=T_{t}(H_{Lee}(K))$. We define $\mathfrak{u}_{X}(K)$ to be the maximal order of $X$-torsion in $H_{Lee}(K)$.

\begin{thm}

For any knot $K$, $\mathfrak{u}_{X}(K)$ gives a lower bound for the unknotting number of $K$.

\end{thm}

We prove this by defining crossing change maps $f$ and $g$ as shown below such that on homology, both $f_{*} \circ g_{*}$ and $g_{*} \circ f_{*} $ are either equal to $2X$ or $-2X$. The diagrams $D_{+}$ and $D_{-}$ differ at a single crossing $c$, where $D_{+}$ has a positive crossing and $D_{-}$ has a negative crossing.

\begin{figure}[h!]
\centering
\begin{tikzpicture}
\matrix(m)[matrix of math nodes,
row sep=10em, column sep=12em,
text height=1.5ex, text depth=0.25ex]
{C_{Lee}(D_{+})&C_{Lee}(D_{-})\\};
\path[-{stealth}]
(m-1-1) edge[bend left = 15] node[above] {$f$} (m-1-2)
(m-1-2) edge[bend left = 15] node[below] {$g$} (m-1-1);
\end{tikzpicture}
\label{h}
\end{figure}

Using similar chain maps, the first author gives a lower bound for the unknotting number from Bar-Natan homology \cite{A-BN}.

Since $X^{2}=t$, we have

\[  \left \lceil{\mathfrak{u}_{X}(K)/2 }\right \rceil = \mathfrak{u}_{t}(K)\]

\noindent
where $\left \lceil{x }\right \rceil$ is the ceiling of $x$. The variable $t$ keeps track of the Lee filtration, so if we add $1$ to the maximal order of $t$-torsion in $H_{Lee}(K)$, the result is exactly the page at which the Lee spectral sequence collapses.
%\todo{AA Is it max order of $t$-torsion plus one?}

\begin{thm}

If $K$ is a knot with $u(K) \le 2$ and $K$ is not the unknot, then the Lee spectral sequence for $K$ collapses at the $E_{2}$ page.

\end{thm}

\begin{cor}

The Knight Move Conjecture is true for all knots $K$ with $u(K) \le 2$.

\end{cor}

%\subsection{Acknowledgements} The authors would like to thank...

\section{Background}

In this section we will describe the Khovanov chain complex and the Lee deformation. We will use a notation that makes the module structure clear. 

\subsection{The Standard Khovanov Complex} Assume $L$ be a link in $S^{3}$ with diagram $D \subset \mathbb{R}^{2}$. Let $\mathfrak{C}=\{c_{1}, c_{2}, ..., c_{n}\}$ denote the crossings in $D$, and viewing  $D$ as a 4-valent graph, let $E=\{e_{1}, e_{2}, ..., e_{m}\}$ denote the edges of $D$. The \emph{edge ring} is defined to be 

\[  R := \mathbb{Q}[X_{1}, X_{2},...,X_{m}]/\{X_{1}^{2}=X_{2}^{2}=...=X_{m}^{2}=0 \}   \]

\noindent
with each variable $X_{i}$ corresponding to the edge $e_{i}$. 

Each crossing $c_{i}$ can be resolved in two ways, the 0-resolution and the 1-resolution (see Figure \ref{resolutions}). For each $v \in \{0,1\}^{n}$, let $D_{v}$ denote the diagram obtained by replacing the crossing $c_{i}$ with the $v_{i}$-resolution. The diagram $D_{v}$ is a disjoint union of circles - denote the number of circles by $k_{v}$. The vector $v$ determines an equivalence relation on $E$, where $e_{p} \sim_{v} e_{q}$ if $e_{p}$ and $e_{q}$ lie on the same component of $D_{v}$.

\begin{figure}[ht]
\vspace{4mm}
\centering
\begin{overpic}[width = .8\textwidth]{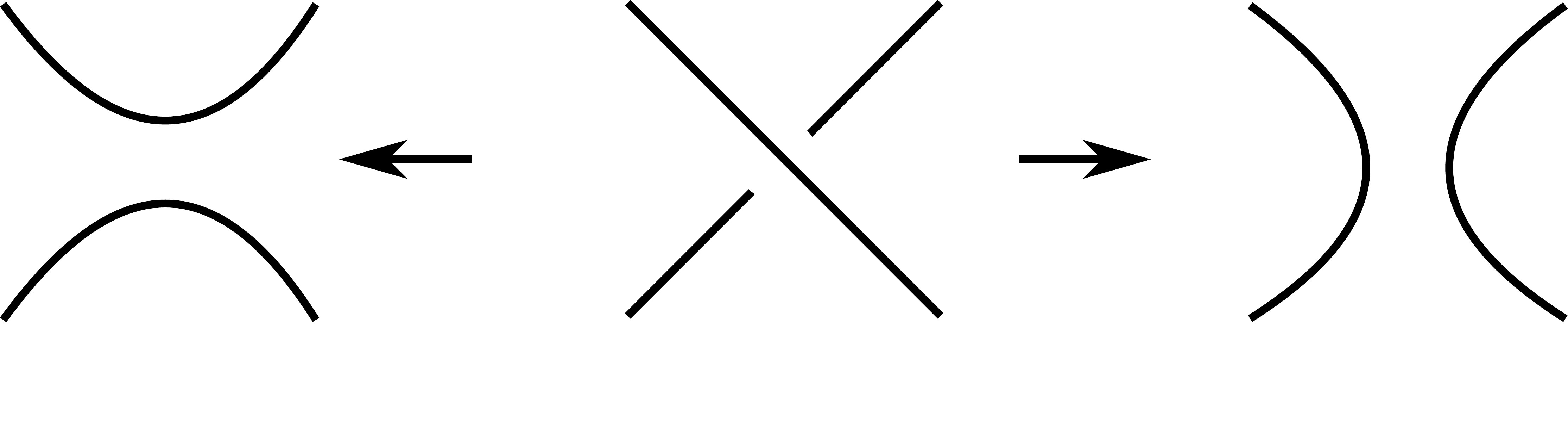}
\put(9.5, 19){$\bullet$}
\put(9.5, 13.75){$\bullet$}
\put(86.3,16.5){$\bullet$}
\put(91.7,16.5){$\bullet$}
\put(0,29){$e_{i}$}
\put(0,4){$e_{j}$}
\put(19, 29){$e_{k}$}
\put(19,4){$e_{l}$}
\put(0,-2){0-resolution}
\put(40,29){$e_i$}
\put(40,4){$e_j$}
\put(59,29){$e_k$}
\put(59,4){$e_l$}
\put(80,29){$e_i$}
\put(80,4){$e_j$}
\put(99,29){$e_k$}
\put(99,4){$e_l$}
\put(80,-2){1-resolution}
\end{overpic}
\caption{}\label{resolutions}
\end{figure}

The module $C_{Kh}(D_{v})$ is defined to be a quotient of the ground ring:

\[ C_{Kh}(D_{v}) := R / \{X_{p}=X_{q} \text{ if } e_{p} \sim_{v} e_{q} \}  \]

\noindent
we will denote this quotient by $R_{v}$.

There is a partial ordering on $\{0,1\}^{n}$ obtained by setting $u \le v$ if $u_{i} \le v_{i}$ for all $i$. We will write  $u \lessdot v$ if  $u \le v$ and they differ at a single crossing, i.e. there is some $i$ for which $u_{i}=0$ and $v_{i}=1$, and $u_{j}=v_{j}$ for all $j \ne i$. Corresponding to each edge of the cube, i.e. a pair $(u \lessdot v)$, there is an embedded cobordism in $\mathbb{R}^{2} \times [0,1]$ from $D_{u}$ to $D_{v}$ constructed by attaching a 1-handle near the crossing $c_{i}$ where $u_{i}<v_{i}$. This cobordism is always a pair of pants, either going from one circle to two circles (when $k_{u}=k_{v}-1$) or from two circles to one circle (when $k_{u}=k_{v}+1$). We call the former a \emph{merge} cobordism and the latter a \emph{split} cobordism.

For each vertex $v$ of the cube, the quotient ring $R_{v}$ is naturally isomorphic to $\mathcal{A}^{\otimes k_{v}}$, where $\mathcal{A}$ is the Frobenius algebra $\mathbb{Q}[X]/(X^{2}=0)$. Recall that the multiplication and comultiplication maps of $\mathcal{A}$ are given as:

%The homomorphisms corresponding to merge and split cobordisms are given using the multiplication and comultiplication maps of the Frobenius algebra. More precisely, if the edges at the corresponding crossing are $e_{i}$, $e_{j}$, $e_{k}$, $e_{l}$ as in Figure \ref{resolutions}, then:
\begin{displaymath}
m:\mathcal{A}\otimes_{\mathbb{Q}}\mathcal{A}\rightarrow\mathcal{A}:\begin{cases}
\begin{array}{lcc}
1 \mapsto 1&,&X_{1} \mapsto X\\
X_{1}X_{2} \mapsto 0&,&X_{2} \mapsto X
\end{array}
\end{cases}
\end{displaymath}
and 
\begin{displaymath}
\Delta:\mathcal{A}\rightarrow\mathcal{A}\otimes_{\mathbb{Q}}\mathcal{A}:\begin{cases}
1 \mapsto X_{1}+X_{2}\\
X \mapsto X_{1}X_{2}
\end{cases}
\end{displaymath}
%\todo{AA Is the change in the above formula and the paragraph before it ok?}

%\begin{equation*}
%\begin{aligned}[c]
%& 1 \xmapsto{m} 1\\
%& X_{i} \xmapsto{m} X_{i}=X_{j}\\
%& X_{j} \xmapsto{m} X_{i}=X_{j}\\
%& X_{i}X_{j} \xmapsto{m} 0
%\end{aligned}
%\qquad \hspace{10mm}\qquad
%\begin{aligned}[c]
%&1 \xmapsto{\Delta} X_{j}+X_{k}\\
%&X_{i} \xmapsto{\Delta} X_{j}X_{k}\\
%\end{aligned}
%\end{equation*}

%\vspace{2mm}
%\noindent

%In terms of the quotients $R_{u}$ and $R_{v}$, the map $m$ is projection, while $\Delta$ is multiplication by $X_{j}+X_{k}$. Note that $X_{j}+X_{k}=X_{i}+X_{l}$.

The chain complex $C_{Kh}(D)$ is defined to be the direct sum of the $C_{Kh}(D_{v})$ over all vertices in the cube:

\[ C_{Kh}(D) := \bigoplus_{v \in \{0,1\}^{n}} C_{Kh}(D_{v}) \]

The differential decomposes over the edges of the cube. When $u \lessdot v$ corresponds to a merge cobordism, define 

\[   \delta_{u,v}: C_{Kh}(D_{u}) \to C_{Kh}(D_{v})    \]

\noindent
to be the Frobenius multiplication map, and when $u \lessdot v$ corresponds to a split cobordism, define $\delta_{u,v}$ to be the comultiplication map. In terms of the quotient rings $R_{u}$ and $R_{v}$, the map $m$ is projection, while $\Delta$ is multiplication by $X_{j}+X_{k}$, where $e_{i}$, $e_{j}$, $e_{k}$, $e_{l}$ are the edges at the corresponding crossing as in Figure \ref{resolutions}. Note that $X_{j}+X_{k}=X_{i}+X_{l}$.
%\todo{AA I moved the interpretation in terms of quotient rings to here.}

If $D_{u}$ and $D_{v}$ differ at crossing $c_{i}$, define $\epsilon_{u,v} = \sum_{j < i } u_{j} $. Then 

\[   \delta = \sum_{u \lessdot v} (-1)^{\epsilon_{u,v}} \delta_{u,v}   \]

The Khovanov complex is bigraded, with a homological grading and a quantum grading. Up to an overall grading shift, the homological grading is just the height in the cube. Setting $|v|= \sum_{i} v_{i}$, $n_{+}$ the number of positive crossings in $D$, an $n_{-}$ the number of negative crossings in $D$, we have 

\[ \mathrm{gr}_{h}(R_{v}) = |v|-n_{-}\]
%\todo{AA I changed $gr$ to $\mathrm{gr}$.}
%\todo{AA I changed $D_v$ to $R_v$, is it wrong?}
\noindent
For each vertex $v$ of the cube, the quantum grading of $1\in R_{v}$ is given by 

\[ \mathrm{gr}_{q}(1 \in R_{v}) = n_{+}-2n_{-}+|v|+k_{v} \]

\noindent
and each variable $X_{i}$ has quantum grading $-2$. With respect to the bigrading $(\mathrm{gr}_{h}, \mathrm{gr}_{q})$, the differential $\delta$ has bigrading $(1,0)$. The Khovanov homology $H_{Kh}(D)$ is the homology of this complex

\[ H_{Kh}(D) = H_{*}(C_{Kh}(D), \delta)       \]

\subsection{The Lee Deformation} The Lee deformation on Khovanov homology comes from a small modification of the ring $R$. If we replace $R$ with the ring

\[  R' := \mathbb{Q}[X_{1}, X_{2},...,X_{k}, t]/\{X_{1}^{2}=X_{2}^{2}=...=X_{l}^{2}=t \}   \]

\noindent
and define everything as in the previous section, the result is a complex $C_{Lee}(D)$. The variable $t$ has homological grading $0$ and quantum grading $-4$, so the complex is still bigraded. Note that $R'/(t=0) \cong R$, so $C_{Lee}(D)/(t=0) \cong C_{Kh}(D)$.

The edge maps are still given by projection for $m$ and multiplication by $X_{j}+X_{k}$ for $\Delta$. More precisely, for  $u\lessdot v$ the edge homomorphism $\delta_{uv}:R_u\rightarrow R_v$ is given by
\begin{equation*}
\begin{aligned}[c]
&1 \xmapsto{m} 1\\
&X_{i} \xmapsto{m} X_{i}\\
&X_{j} \xmapsto{m} X_{i}\\
&X_{i}X_{j} \xmapsto{m} t
\end{aligned}
\qquad \hspace{10mm}\text{or}\hspace{10mm}\qquad
\begin{aligned}[c]
&1 \xmapsto{\Delta} X_{j}+X_{k}\\
&X_{i} \xmapsto{\Delta} X_{j}X_{k}+t
\end{aligned}.
\end{equation*}

As before, $e_i$, $e_j$, $e_k$ and $e_l$ are the edges at the corresponding crossing as in Figure \ref{resolutions}. By a minor abuse of notation, we refer to this differential as $\delta$ as well.

\noindent
The Lee homology is defined to be the homology of this complex,

\[   H_{Lee}(D) = H_{*}(C_{Lee}(D), \delta).     \]

\begin{rem}
The actual complex defined by Lee in \cite{LeeSS} is given by $C_{Lee}(D)/(t=1)$. Setting $t=1$ replaces the $q$-grading with a filtration, which induces the Lee spectral sequence. The number of page at which the Lee spectral sequence collapses is $1$ more than the maximal degree of $t$-torsion in $H_{Lee}(D)$. 
%\todo{AA maximal degree plus $1$?}
\end{rem}

\begin{thm}[\cite{LeeSS}]
If $D$ is a diagram for a knot $K$, then ignoring gradings, $H_{*}(C_{Lee}(D))$ decomposes as 

\[ H_{Lee}(D)) \cong \mathbb{Q}[t] \oplus \mathbb{Q}[t] \oplus T(H_{Lee}(D))   \]

\noindent
where $T(C)$ is the $t$-torsion part of $C$.
\end{thm}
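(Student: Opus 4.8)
The plan is to follow Lee's original strategy of exhibiting an explicit basis of $H_{Lee}(D)$ adapted to the orientations of the link, then specialize to the one-component case. First I would work over the field $\mathbb{Q}(t)$ (or equivalently invert $t$), where the Frobenius algebra $\mathbb{Q}(t)[X]/(X^2=t)$ splits as a product of two copies of $\mathbb{Q}(t)$ via the idempotents $\mathbf{a} = \tfrac{1}{2}(1 + X/\sqrt{t})$ and $\mathbf{b} = \tfrac{1}{2}(1 - X/\sqrt{t})$ (after adjoining $\sqrt{t}$). Under this change of basis, the multiplication and comultiplication maps become diagonal, so the complex $C_{Lee}(D) \otimes \mathbb{Q}(\sqrt t)$ decomposes as a direct sum over ways of coloring each circle of each resolution by $\mathbf{a}$ or $\mathbf{b}$, and the differential only connects colorings that are "compatible." The standard computation (Lee, and Rasmussen's exposition) shows that for each of the $2^{|L|}$ orientations $o$ of the link $L$, there is exactly one generator $\mathfrak{s}_o$ surviving to homology, so $H_{Lee}(D) \otimes \mathbb{Q}(\sqrt t)$ is free of rank $2^{|L|}$; for a knot this is rank $2$.

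Next I would descend from $\mathbb{Q}(\sqrt t)$ back to $\mathbb{Q}[t]$. Since $H_{Lee}(D) \otimes \mathbb{Q}(t)$ has dimension $2$ over $\mathbb{Q}(t)$, the $\mathbb{Q}[t]$-module $H_{Lee}(D)$ has free rank exactly $2$, and by the structure theorem for finitely generated modules over the PID $\mathbb{Q}[t]$ it splits as $\mathbb{Q}[t]^2 \oplus T$, where $T = T_t(H_{Lee}(D))$ is the torsion submodule — a finite direct sum of cyclic modules $\mathbb{Q}[t]/(t^{k_i})$ (the only relevant torsion, since any prime other than $t$ would survive after inverting $t$, contradicting rank $2$; in fact one should check that $\mathbb{Q}[t]/(p(t))$ for $p \neq t$ cannot occur — this follows because $C_{Lee}(D)/(t=1)$ is acyclic away from the two towers, which forces all torsion to be supported at $t=0$). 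Here "ignoring gradings" is essential because the Frobenius-algebra splitting via $\sqrt{t}$ does not respect the internal quantum grading; one recovers only the ungraded module structure.

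The main obstacle is making the descent argument from $\mathbb{Q}(\sqrt t)$ down to $\mathbb{Q}[t]$ fully rigorous, in particular verifying that no torsion at primes $p(t) \neq t$ appears and that the two free generators can genuinely be taken to be a summand (rather than merely giving rank $2$). The cleanest route is: (i) the universal coefficients/specialization $H_{Lee}(D)/(t=0) \cong H_{Kh}(D)$ is finite-dimensional, which already bounds the torsion; (ii) Lee's explicit cycles $\mathfrak{s}_o$, $\mathfrak{s}_{\bar o}$ (for the two orientations of the knot and its reverse) are defined over $\mathbb{Q}[t]$ after clearing denominators, and one checks directly that their images in $H_{Lee}(D)$ generate a free rank-$2$ submodule that is a direct summand because the quotient is $t$-torsion. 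I would present the idempotent decomposition as the conceptual heart, cite Lee and Rasmussen for the combinatorial count of surviving generators, and spend the remaining care on the $\mathbb{Q}[t]$-module bookkeeping.
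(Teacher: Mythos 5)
The paper does not actually prove this theorem---it is quoted from Lee's paper \cite{LeeSS} (together with the $\mathbb{Q}[t]$-lift of \cite{Kh-Frobext}), so the comparison here is against the cited literature rather than an in-paper argument. Your overall strategy is the right reconstruction of Lee's proof: the idempotents $\tfrac{1}{2}(1\pm X/\sqrt{t})$ do split the Frobenius algebra after adjoining $\sqrt{t}$, the surviving generators are indexed by the $2^{|L|}$ orientations, and the structure theorem over the PID $\mathbb{Q}[t]$ gives $\mathbb{Q}[t]^{2}\oplus(\text{torsion})$ once the free rank is pinned down at $2$.

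There is, however, a genuine gap in the last step: the theorem asserts that the complement of the two towers is exactly the \emph{$t$-torsion} $T(H_{Lee}(D))$, i.e.\ that no summand $\mathbb{Q}[t]/(p(t)^{k})$ with $p\neq t$ occurs, and neither of your arguments establishes this. Tensoring with the fraction field $\mathbb{Q}(t)$ (or $\mathbb{Q}(\sqrt{t})$) kills \emph{all} torsion, so rank $2$ over $\mathbb{Q}(t)$ is consistent with, say, a $\mathbb{Q}[t]/(t-2)$ summand; and the specialization $t=1$ only controls torsion supported at the single prime $(t-1)$, not at arbitrary $p(t)\neq t$. Two standard fixes: (i) do not discard the quantum grading---$H_{Lee}(D)$ is a finitely generated \emph{graded} module over $\mathbb{Q}[t]$ with $\deg_{q}t=-4\neq 0$, and the only proper homogeneous ideals of such a graded PID are the $(t^{k})$, so every torsion summand is automatically of the form $\mathbb{Q}[t]/(t^{k})$ (the phrase ``ignoring gradings'' in the statement refers only to suppressing the grading shifts on the two towers, not to the module being ungraded); or (ii) run Lee's orientation-generator computation over $\mathbb{Q}[\sqrt{t},\sqrt{t}^{-1}]$ rather than over the fraction field, conclude that $H_{Lee}(D)\otimes_{\mathbb{Q}[t]}\mathbb{Q}[t,t^{-1}]$ is free of rank $2$ by faithfully flat descent, and deduce that all torsion dies upon inverting $t$, hence is $t$-primary. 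Either repair is short, but as written the proposal proves only that the free rank is $2$, not the stated decomposition.
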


\noindent
In other words, the free part of $H_{*}(C_{Lee}(D))$ is isomorphic to the Lee homology of the unknot, and 

\[ H_{*}(C_{Lee}(D)/(t=1)) \cong \mathbb{Q} \oplus \mathbb{Q} \]

\section{The Crossing Change Map}

Let $D_{+}$ and $D_{-}$ be two diagrams that differ at a single crossing $c$, with $D_{+}$ having a positive crossing and $D_{-}$ having a negative crossing. Let $e_{i}, e_{j}, e_{k}, e_{l}$ be the adjacent edges, as in Figure \ref{CC}. In this section we will define chain maps

\[f: C_{Lee}(D_{+}) \to C_{Lee}(D_{-}) \]

\[g: C_{Lee}(D_{-}) \to C_{Lee}(D_{+}) \]

\noindent
such that for any $1\le\mathsf{i}\le m$, both $f \circ g$ and $g \circ f$ are chain homotopy equivalent to multiplication by $2 X_{\mathsf{i}}$ or $-2X_{\mathsf{i}}$.
%\todo{AA is multiplication by $2 X_i$ chain homotopic to multiplication by $-2X_i$?}

\begin{figure}[ht]
\centering
\def\svgwidth{10cm}
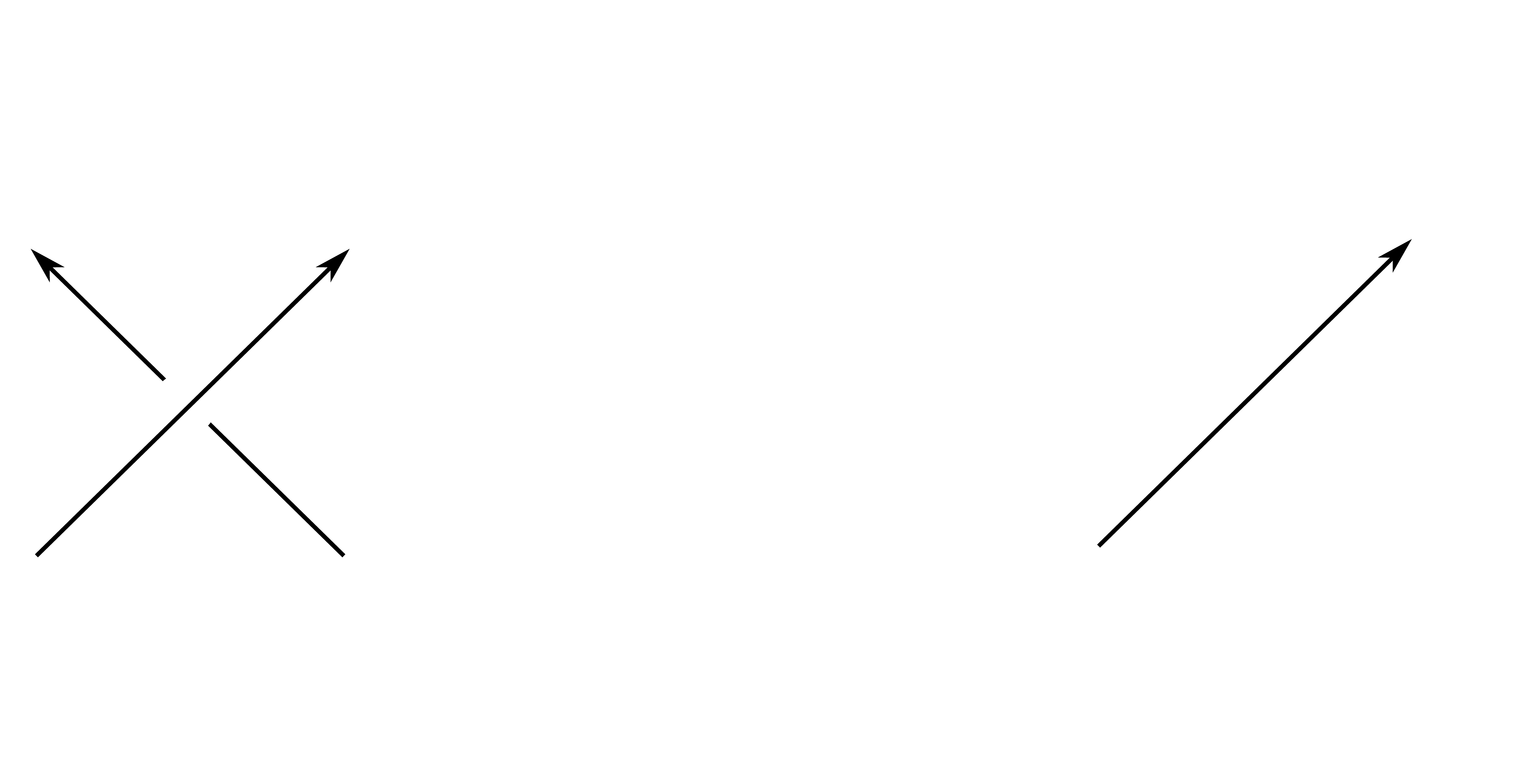
\caption{}\label{CC}
\end{figure}

Let $D^{0}_{+}$ be the $0$-resolution of $D_{+}$ at $c$ and $D^{1}_{+}$ the $1$-resolution $D_{+}$ at $c$, and define $D^{0}_{-}$, $D^{1}_{-}$ analogously. Note that $D^{0}_{+}$ and $D^{1}_{-}$ are the same diagram, as are $D^{1}_{+}$ and $D^{0}_{-}$. We can write 

\[ C_{Lee}(D_{+}) = C_{Lee}(D^{0}_{+}) \xrightarrow{\hspace{3mm} \delta_{+} \hspace{3mm}} C_{Lee}(D^{1}_{+}) \]

\[ C_{Lee}(D_{-}) = C_{Lee}(D^{0}_{-}) \xrightarrow{\hspace{3mm} \delta_{-} \hspace{3mm}} C_{Lee}(D^{1}_{-}) \]

\noindent
where $\delta_{+}$, $\delta_{-}$ are the edge maps for the respective complexes corresponding to the crossing $c$. As modules (ignoring the differentials), we have $C_{Lee}(D_{+}) = D^{0}_{+} \oplus D^{1}_{+}$ and $C_{Lee}(D_{-}) = D^{0}_{-} \oplus D^{1}_{-}$.

For $a$ in $C_{Lee}(D_{+})$, write $a = (a^{0}, a^{1})$. In order to pin down the signs on $C_{Lee}$, we need to choose an ordering of the crossings. For simplicity, take $c$ to be the last crossing. We define $f: C_{Lee}(D_{+}) \to C_{Lee}(D_{-})$ by 

\[  f(a^{0},a^{1}) = ((X_{j}-X_{k})a^{1}, a^{0})    \]

\noindent
Similarly, we define $g: C_{Lee}(D_{-}) \to C_{Lee}(D_{+})$ by 

\[ g(b^{0},b^{1}) = ((X_{j}-X_{k})b^{1}, b^{0})     \]

\noindent
Diagrammatically, these maps are depicted in Figures \ref{f} and \ref{g}. 

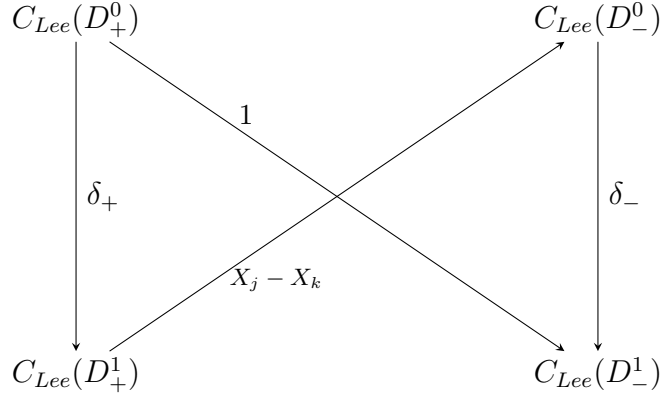
\begin{figure}
\centering
\begin{tikzpicture}
\matrix(m)[matrix of math nodes,
row sep=10em, column sep=12em,
text height=1.5ex, text depth=0.25ex]
{C_{Lee}(D^{0}_{+})&C_{Lee}(D^{0}_{-})\\
C_{Lee}(D^{1}_{+})&C_{Lee}(D^{1}_{-})\\};
\path[-{stealth}]
(m-1-1) edge node[above, pos = .3 ] {\small $1$} (m-2-2)
edge node[auto] {$\delta_{+}$} (m-2-1)
(m-1-2) edge node[auto] {$\delta_{-}$} (m-2-2)
(m-2-1) edge node[below, pos = .3] {\scriptsize \hspace{8mm}$X_{j}-X_{k}$} (m-1-2);
\end{tikzpicture}
\caption{The Chain Map $f$.}
\label{f}
\end{figure}

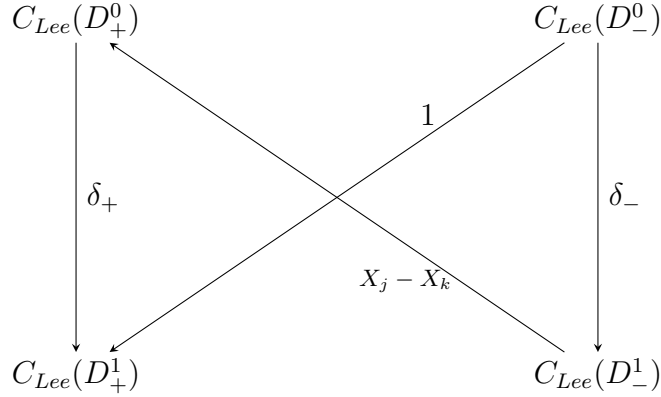
\begin{figure}
\centering
\begin{tikzpicture}
\matrix(m)[matrix of math nodes,
row sep=10em, column sep=12em,
text height=1.5ex, text depth=0.25ex]
{C_{Lee}(D^{0}_{+})&C_{Lee}(D^{0}_{-})\\
C_{Lee}(D^{1}_{+})&C_{Lee}(D^{1}_{-})\\};
\path[-{stealth}]
(m-1-1) edge node[auto] {$\delta_{+}$} (m-2-1)
(m-1-2) edge node[auto] {$\delta_{-}$} (m-2-2)
(m-1-2) edge node[above, pos = .3] {$1$} (m-2-1)
(m-2-2) edge node[below, pos = .3] {\scriptsize \hspace{-7mm} $X_{j}-X_{k}$} (m-1-1);
\end{tikzpicture}
\caption{The Chain Map $g$.}
\label{g}
\end{figure}

\begin{lem}
The maps $f$ and $g$ are chain maps.
\end{lem}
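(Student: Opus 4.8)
The plan is to verify the chain-map identity $f\circ d_+ = d_-\circ f$ (writing $d_\pm$ for the Lee differential on $C_{Lee}(D_\pm)$) by a direct computation in the mapping-cone description, and then to note that $g$ is handled by the same calculation. Since $c$ is taken to be the last crossing, the cube of $D_+$ splits along its last coordinate, so $C_{Lee}(D_+) = C_{Lee}(D^0_+)\oplus C_{Lee}(D^1_+)$ with
\[ d_+(a^0,a^1)=\bigl(\partial a^0,\ \partial a^1+\delta_+ a^0\bigr), \]
and likewise $C_{Lee}(D_-) = C_{Lee}(D^0_-)\oplus C_{Lee}(D^1_-)$ with $d_-(b^0,b^1)=\bigl(\partial b^0,\ \partial b^1+\delta_- b^0\bigr)$, where $\partial$ denotes, in each slot, the internal Lee differential of the relevant subcube over the remaining crossings. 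Because $c$ is last, no extra signs appear on the internal differentials, and the equalities of diagrams $D^0_+=D^1_-$, $D^1_+=D^0_-$ identify the four corner complexes in pairs, with matching $\partial$'s; the only surviving sign is the standard $(-1)^{|v'|}$ on $\delta_\pm$ over each subcube summand, and since every term it multiplies will be required to vanish, it can be ignored throughout.

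First I would expand both composites on a general element $(a^0,a^1)$, using $f(a^0,a^1)=\bigl((X_j-X_k)\,a^1,\ a^0\bigr)$. Since multiplication by $X_j-X_k$ is $R'$-linear it commutes with $\partial$, so all terms containing $\partial$ cancel between $f\circ d_+$ and $d_-\circ f$, and $f\circ d_+=d_-\circ f$ reduces to the two identities
\[ (X_j-X_k)\,\delta_+ = 0 \qquad\text{and}\qquad (X_j-X_k)\,\delta_- = 0 \]
of maps between the corner complexes. The same expansion for $g$, which is symmetric to that for $f$ under interchanging $D_+$ and $D_-$, produces exactly these two identities again; so it is enough to establish them.

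To do so I would fix a resolution $v'$ of the crossings other than $c$; then one of the two edge maps of $c$ (in $D_+$ and in $D_-$) is a merge and the other is a split. In the split case the edge map is multiplication by $X_j+X_k$ (the Lee comultiplication of the previous section, using also $X_j+X_k=X_i+X_l$ in the target ring), and
\[ (X_j-X_k)(X_j+X_k)=X_j^2-X_k^2=t-t=0 \]
in $R'$. In the merge case the edge map is the quotient projection onto $C_{Lee}$ of the merged resolution of $c$, so it suffices to see that $X_j=X_k$ holds there; this is the one genuinely geometric point. I would argue it by following components: in the merged resolution the two arcs at $c$ must lie on a single circle (otherwise reversing the resolution would merge a further pair of circles, contradicting that this edge is a merge), and those two arcs together carry all of $e_i,e_j,e_k,e_l$, so $X_i=X_j=X_k=X_l$ in the corresponding quotient ring. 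Running through every $v'$ gives both displayed identities and hence the lemma. The only part of this that is not bookkeeping is the component-tracing in the merge case; everything else is the cone computation, in which the signs play no role.
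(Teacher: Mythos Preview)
Your proof is correct and follows essentially the same approach as the paper: both reduce the chain-map identity to showing $(X_j-X_k)\delta_+=0$ and $\delta_-(X_j-X_k)=0$ (equivalently $(X_j-X_k)\delta_-=0$, by $R'$-linearity of the edge maps), and both verify these via a merge/split case analysis at each subcube vertex. Your write-up is somewhat more explicit about the mapping-cone bookkeeping and adds the observation that at each $v'$ exactly one of $\delta_+,\delta_-$ is a merge and the other a split, while the paper states the merge case more directly (simply noting $X_j=X_k$ in the target) rather than via your component-tracing contradiction; but these are differences of presentation, not of substance.
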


\begin{proof}
We will prove it for $f$ ( The proof for $g$ is completely analogous). The map by $1$ doesn't interact with the edge maps $\delta_{+}$ or $\delta_{-}$, and since we chose $c$ to be the last crossing, the negative signs are the same in $C_{Lee}(D^{0}_{+})$ and in $C_{Lee}(D^{1}_{-})$ in a manner to make the map commute with the respective differentials. Thus, it is a chain map.

Similarly, the negative signs in $C_{Lee}(D^{1}_{+})$ and in $C_{Lee}(D^{0}_{-})$ make the multiplication by $X_{j}-X_{k}$ anticommute with all differentials within these two complexes. Thus, to see that the multiplication by $X_{j}-X_{k}$ is a chain map, we must show $(X_{j}-X_{k})\delta_{+}= \delta_{-}(X_{j}-X_{k})=0$.

The map $\delta_{+}$ is either a merge or a split, depending on the vertex in $C_{Lee}(D^{0}_{+})$. For the vertices where it is a merge, $e_{j}$ and $e_{k}$ will lie on the same circle in the corresponding vertex in $C_{Lee}(D^{1}_{+})$, so $X_{j}=X_{k}$ at these vertices and $(X_{j}-X_{k})\delta_{+}=0$. For the vertices where it is a split, $\delta_{+}$ is multiplication by $X_{j}+X_{k}$, so $(X_{j}-X_{k})\delta_{+} = X_{j}^{2}-X_{k}^{2}= t-t=0$.

The argument that $\delta_{-}(X_{j}-X_{k})=0$ is similar. If we choose a vertex where $\delta_{-}$ is a merge map, then we will have $X_{j}=X_{k}$ on $C_{Lee}(D^{1}_{-})$. But if $\delta_{-}$ is a split map, then $X_{j}-X_{k}$ is already equal to zero on $C_{Lee}(D^{0}_{-})$.

\end{proof}

\begin{lem}

For any $a$ in $C_{Lee}(D_{+})$ and any $b$ in $C_{Lee}(D_{-})$, $(g \circ f)(a)=(X_{j}-X_{k})a$ and $(f \circ g)(b)=(X_{j}-X_{k})b$.

\end{lem}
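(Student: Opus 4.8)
The plan is to compute the composites $g \circ f$ and $f \circ g$ directly from the explicit formulas for $f$ and $g$, since both are defined by the same ``swap and multiply'' pattern on the two-term complexes. Writing $a = (a^{0}, a^{1})$ in $C_{Lee}(D_{+}) = C_{Lee}(D^{0}_{+}) \oplus C_{Lee}(D^{1}_{+})$, I would first apply $f$ to get $f(a) = ((X_{j} - X_{k})a^{1}, a^{0})$, an element of $C_{Lee}(D_{-}) = C_{Lee}(D^{0}_{-}) \oplus C_{Lee}(D^{1}_{-})$. Then I apply $g$, whose formula is $g(b^{0}, b^{1}) = ((X_{j} - X_{k})b^{1}, b^{0})$, to this element. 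Here $b^{0} = (X_{j} - X_{k})a^{1}$ and $b^{1} = a^{0}$, so $g(f(a)) = ((X_{j} - X_{k})a^{0}, (X_{j} - X_{k})a^{1}) = (X_{j} - X_{k})a$, as claimed. The computation for $f \circ g$ is literally the same manipulation with the roles of $D_{+}$ and $D_{-}$ interchanged, and because the formula for $g$ uses the same factor $X_{j} - X_{k}$ (with $e_{j}, e_{k}$ being the same edges at the crossing $c$ in both $D_{+}$ and $D_{-}$), one gets $f(g(b)) = (X_{j} - X_{k})b$ for $b \in C_{Lee}(D_{-})$.

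The only subtlety I would need to be careful about is the identification of the underlying modules: one must use that $D^{0}_{+} = D^{1}_{-}$ and $D^{1}_{+} = D^{0}_{-}$ as diagrams (already noted in the text), so that the target of the first component of $f$ really does land in the module on which the first component of $g$ acts, and similarly for the second component. Since the edges $e_{i}, e_{j}, e_{k}, e_{l}$ adjacent to $c$ are the same for $D_{+}$ and $D_{-}$, the variables $X_{j}, X_{k}$ are unambiguous throughout, and no reindexing of the $X$-variables is required when passing between the two complexes. I do not expect any real obstacle here — this is a direct two-line substitution once the module identifications are written down carefully; the content of the lemma is entirely in having chosen $f$ and $g$ with matching formulas, which was done in the definitions above.

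I would present the proof simply by carrying out the substitution for $g \circ f$ explicitly, then remarking that $f \circ g$ follows by the symmetric argument.

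\begin{proof}
Recall that $D^{0}_{+} = D^{1}_{-}$ and $D^{1}_{+} = D^{0}_{-}$ as diagrams, and that $e_{i}, e_{j}, e_{k}, e_{l}$ are the edges adjacent to $c$ in both $D_{+}$ and $D_{-}$, so the variables $X_{j}$ and $X_{k}$ make sense in all four modules $C_{Lee}(D^{0}_{\pm})$, $C_{Lee}(D^{1}_{\pm})$ and the identifications above respect them. Write $a = (a^{0}, a^{1})$ with $a^{0} \in C_{Lee}(D^{0}_{+})$ and $a^{1} \in C_{Lee}(D^{1}_{+})$. By definition,
\[
f(a^{0}, a^{1}) = \bigl( (X_{j} - X_{k}) a^{1},\ a^{0} \bigr),
\]
viewed as an element of $C_{Lee}(D_{-}) = C_{Lee}(D^{0}_{-}) \oplus C_{Lee}(D^{1}_{-})$, that is with first component $(X_{j} - X_{k}) a^{1} \in C_{Lee}(D^{0}_{-}) = C_{Lee}(D^{1}_{+})$ and second component $a^{0} \in C_{Lee}(D^{1}_{-}) = C_{Lee}(D^{0}_{+})$. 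Applying $g$, which sends $(b^{0}, b^{1})$ to $\bigl( (X_{j} - X_{k}) b^{1},\ b^{0} \bigr)$, we obtain
\[
(g \circ f)(a^{0}, a^{1}) = \bigl( (X_{j} - X_{k}) a^{0},\ (X_{j} - X_{k}) a^{1} \bigr) = (X_{j} - X_{k})\, a.
\]
The computation of $f \circ g$ is identical after interchanging the roles of $D_{+}$ and $D_{-}$: for $b = (b^{0}, b^{1}) \in C_{Lee}(D_{-})$ one has $g(b) = \bigl( (X_{j} - X_{k}) b^{1},\ b^{0} \bigr)$, and then $f$ multiplies the incoming second component by $X_{j} - X_{k}$ and swaps, giving $(f \circ g)(b) = (X_{j} - X_{k})\, b$.
\end{proof}
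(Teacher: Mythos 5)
Your proposal is correct and matches the paper, which simply states that the lemma ``is clear from the definitions of $f$ and $g$''; your explicit substitution, together with the careful identification $D^{0}_{+}=D^{1}_{-}$ and $D^{1}_{+}=D^{0}_{-}$, is exactly the computation the authors leave to the reader.
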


This is clear from the definitions of $f$ and $g$. The lemma becomes interesting with the following result of Hedden and Ni.

\begin{lem}[\cite{HN-Khovanov}] Let $D$ be a planar diagram for a link $L$. If edges $e_{j}$ and $e_{k}$ are diagonal from one another at a crossing $c$ (positive or negative), then there is a chain homotopy $H: C_{Lee}(D) \to C_{Lee}(D)$ satisfying 

\[ \delta H + H \delta = X_{j}+X_{k}   \]

\end{lem}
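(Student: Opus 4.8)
The final statement to prove is the Hedden--Ni lemma: the existence of a chain homotopy $H$ on $C_{Lee}(D)$ with $\delta H + H \delta = X_j + X_k$ whenever $e_j$ and $e_k$ are diagonal at a crossing $c$.

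\textbf{Proof proposal.} The plan is to reduce to the local picture at $c$ and build $H$ as the composition of a ``handle-attachment'' cobordism map with its reverse, suitably normalized. First I would split the complex $C_{Lee}(D)$ along the crossing $c$ as in the preceding section: writing $D^0$ and $D^1$ for the two resolutions at $c$, we have $C_{Lee}(D) = \big(C_{Lee}(D^0) \xrightarrow{\ \delta_c\ } C_{Lee}(D^1)\big)$ as a mapping cone, where $\delta_c$ is the edge map at $c$ (a merge at some vertices, a split at others), and the remaining part of the differential, call it $\delta'$, acts within each of $C_{Lee}(D^0)$ and $C_{Lee}(D^1)$. Since $e_j$ and $e_k$ are diagonal at $c$, in the resolution that merges them (say $D^1$, up to relabeling) the edges $e_j$ and $e_k$ lie on the same circle, so $X_j = X_k$ on $C_{Lee}(D^1)$; in the other resolution $D^0$, the map $\delta_c$ is multiplication by $X_j + X_k$ (the comultiplication, expressed as in the displayed edge-map formulas). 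This is exactly the algebraic input that makes the construction work.

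Next I would define $H$ on $C_{Lee}(D) = C_{Lee}(D^0) \oplus C_{Lee}(D^1)$ by the ``off-diagonal'' formula $H(a^0, a^1) = (0, a^0)$, i.e. $H$ is the identity map $C_{Lee}(D^0) \to C_{Lee}(D^1)$ placed in the slot that lowers the cube coordinate at $c$, and zero otherwise — this is the direct analogue of the map by $1$ appearing in the chain maps $f$ and $g$ above, but now running ``backwards'' along the edge $c$. With $c$ taken to be the last crossing (so the sign $\epsilon_{u,v}$ at edge $c$ is constant), the map by $1$ commutes with $\delta'$ on both resolutions. Then I would compute $\delta H + H \delta$ directly. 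The internal differential $\delta'$ contributes $\delta' H + H \delta' = 0$ because $H$ commutes with $\delta'$ (up to the sign bookkeeping arranged by the choice of ordering). The only surviving term is the interaction of $H$ with $\delta_c$: on $C_{Lee}(D^0)$ we get $\delta_c \circ (\text{identity into } D^1)$ acting the wrong way is zero, while $H \circ \delta_c$ followed by... — more precisely, tracking the mapping-cone signs, $\delta H + H\delta$ restricted to $C_{Lee}(D^0)$ equals $\delta_c$ itself (composition $H$ then nothing, plus $\delta_c$ then $H=\mathrm{id}$ pushed back), which on $D^0$ is multiplication by $X_j + X_k$; and restricted to $C_{Lee}(D^1)$ it equals $\delta_c \circ H = \delta_c$ viewed as landing after the identity, which again is multiplication by $X_j + X_k$, but on $D^1$ this equals $X_j + X_k$ as an element of the quotient ring where $X_j = X_k$, consistent with the claim. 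So $\delta H + H \delta$ is multiplication by $X_j + X_k$ globally.

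\textbf{Main obstacle.} The delicate point is the sign and grading bookkeeping: one must verify that with $c$ chosen last, the Koszul-type signs in the cube differential make $H$ anticommute/commute with $\delta'$ in precisely the pattern needed so that no spurious terms appear, and that the two contributions (from $D^0$, where $X_j+X_k$ acts genuinely, and from $D^1$, where it collapses because $X_j=X_k$) assemble to the single operator ``multiplication by $X_j+X_k$'' on all of $C_{Lee}(D)$. A secondary check is that $X_j+X_k = X_i + X_l$ on every resolution, so that the statement does not depend on which diagonal pair at $c$ one names — this follows from the relation $X_j + X_k = X_i + X_l$ already noted after the edge-map definitions, since across any resolution the four edges are identified in pairs respecting this sum. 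Once the signs are pinned down, the rest is the routine mapping-cone computation sketched above; I would present it by writing $H$ as a $2\times 2$ block matrix with respect to the $C_{Lee}(D^0)\oplus C_{Lee}(D^1)$ decomposition and multiplying out $\delta H + H\delta$ block by block.
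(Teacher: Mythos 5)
There is a genuine gap: your choice of homotopy is wrong. You set $H(a^0,a^1)=(0,a^0)$, the ``identity'' along the edge at $c$. First, as written this map has cube-degree $+1$ (it goes from the $0$-resolution to the $1$-resolution, the same direction as $\delta$), so $\delta H + H\delta$ would have homological degree $+2$ and cannot equal multiplication by $X_j+X_k$; a homotopy realizing a degree-$0$ map must go backwards along the edge. Second, even after reversing it, ``the identity'' $C_{Lee}(D^1)\to C_{Lee}(D^0)$ is not a well-defined module map at the vertices where $\delta_c$ is a merge: there $R_v$ is a further quotient of $R_u$ (e.g.\ $X_j=X_k$ in $R_v$ but not in $R_u$), so the identity on representatives does not descend. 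Third, your computation silently assumes $\delta_c$ is multiplication by $X_j+X_k$ on all of $C_{Lee}(D^0)$; in fact whether $\delta_c$ is a merge (projection) or a split (multiplication by $X_j+X_k$) depends on the vertex of the cube, i.e.\ on the resolutions of the \emph{other} crossings, not on the resolution at $c$. At a merge vertex, $\mathrm{id}\circ\delta_c + \delta_c\circ\mathrm{id}$ would produce something like twice a projection, not multiplication by $X_j+X_k$.

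The fix is the idea you gestured at in your opening sentence and then abandoned: take $H$ to be the \emph{reverse cobordism map}, namely the edge map that the crossing $c$ would contribute if its sign were switched ($\delta_-$ on the $C_{Lee}(D^1_+)$ summand when $c$ is positive, and $0$ on $C_{Lee}(D^0_+)$). This map is a split exactly where $\delta_+$ is a merge and vice versa, so it is well defined everywhere, and it anticommutes with every other edge map, killing all cross terms with $\delta'$. What survives is $\delta_+\delta_- + \delta_-\delta_+$, and the Frobenius identity $m\circ\Delta = \Delta\circ m = (X_j+X_k)\cdot(-)$ (checked vertex by vertex, using $X_j=X_k$ on the one-circle side) gives exactly multiplication by $X_j+X_k$. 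Your secondary observation that $X_j+X_k=X_i+X_l$ is correct and harmless, but it does not repair the main computation.
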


Although their proof is for the Khovanov complex, the same argument applies for the Lee complex. For completeness, we will repeat it here. 

\begin{proof}

Suppose the crossing $c$ is positive. Then the complex can be written 

\begin{figure}[h!]
\centering
\begin{tikzpicture}
\matrix(m)[matrix of math nodes,
row sep=10em, column sep=12em,
text height=1.5ex, text depth=0.25ex]
{C_{Lee}(D_{+}^{0})&C_{Lee}(D_{+}^{1})\\};
\path[-{stealth}]
(m-1-1) edge node[above] {$\delta_{+}$} (m-1-2);
\end{tikzpicture}
\end{figure}

\noindent
where $D_{+}^{0}$ and $D_{+}^{1}$ are the 0- and 1-resolution at $c$.

We define $H: C_{Lee}(D) \to C_{Lee}(D) $ to be equal to $0$ on $C_{Lee}(D^0_+)$ summand and $\delta_-$ on $C_{Lee}(D^1_+)$ summand, i.e. the edge map that would have appeared if $c$ were a negative crossing. Our complex with the total differential $\delta+H$ now looks like

\begin{figure}[h!]
\centering
\begin{tikzpicture}
\matrix(m)[matrix of math nodes,
row sep=10em, column sep=12em,
text height=1.5ex, text depth=0.25ex]
{C_{Lee}(D^{0}_+)&C_{Lee}(D^{1}_+)\\};
\path[-{stealth}]
(m-1-1) edge[bend left = 15] node[above] {$\delta_{+}$} (m-1-2)
(m-1-2) edge[bend left = 15] node[below] {$\delta_{-}$} (m-1-1);
\end{tikzpicture}

\end{figure}

\noindent
Since $\delta_{-}$ anticommutes with all edge maps except $\delta_{+}$, $\delta H+H \delta = \delta_{+}\delta_{-}+\delta_{-}\delta_{+}$, which by inspection is equal to $X_{j}+X_{k}$. 

The negative crossing argument is similar. In this case, the chain homotopy $H$ is defined using $\delta_+$ instead of $\delta_{-}$.

\end{proof}

Putting the previous two lemmas together, we get the following corollary.

\begin{cor} For any $1\le \mathsf{i}\le m$, any $a$ in $H_{Lee}(D_{+})$ and any $b$ in $H_{Lee}(D_{-})$, we have $(g_{*} \circ f_{*})(a)=\pm 2X_{\mathsf{i}}a$ and $(f_{*} \circ g_{*})(b)=\pm2X_{\mathsf{i}}b$, where the sign depends on $\mathsf{i}$.
\end{cor}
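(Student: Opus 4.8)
The plan is to combine the two preceding lemmas.  Fix an index $\mathsf{i}$ with $1 \le \mathsf{i} \le m$.  First, by the Hedden--Ni lemma applied to the diagonal edges $e_j$ and $e_k$ at the crossing $c$, there is a chain homotopy $H$ on $C_{Lee}(D_+)$ with $\delta H + H\delta = X_j + X_k$; passing to homology, multiplication by $X_j + X_k$ is the zero map on $H_{Lee}(D_+)$, and likewise on $H_{Lee}(D_-)$.  Combined with the previous lemma, which gives $(g\circ f)(a) = (X_j - X_k)a$ at the chain level, this yields on homology
\[
(g_* \circ f_*)(a) = (X_j - X_k)a = (X_j - X_k)a + (X_j + X_k)a = 2X_j\, a,
\]
and similarly $(f_* \circ g_*)(b) = 2X_j\, b$.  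So the corollary already holds with $\mathsf{i} = j$ (and symmetrically $\mathsf{i} = k$, since $(X_j-X_k) = -(X_k - X_j)$ and $X_j + X_k$ again kills it, giving $-2X_k$).

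The remaining point is to upgrade from the single index $j$ (or $k$) to an arbitrary index $\mathsf{i}$, which is where the $\pm$ and the dependence of the sign on $\mathsf{i}$ enters.  The idea is that $H_{Lee}(D_\pm)$ is a module over the quotient $R'_{\bullet}$ in which many of the edge variables are identified, and in fact in $H_{Lee}$ all the variables $X_\mathsf{i}$ act (up to sign) as a single operator: this is a standard consequence of the structure theorem for Lee homology of a \emph{knot} (Theorem stated above), which says the free part is $\mathbb{Q}[t]\oplus\mathbb{Q}[t]$ with generators on which $X$ acts with a definite sign determined by the orientation/coloring of the corresponding Lee generator, and every $X_\mathsf{i}$ acts as $\pm X$ where the sign records whether the edge $e_\mathsf{i}$ lies on a component colored $+$ or $-$ in Lee's canonical generators.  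Concretely, for a knot there is a single circle's worth of ambiguity, so on the full homology $X_\mathsf{i} = \pm X_j$ for every $\mathsf{i}$, with the sign fixed by $\mathsf{i}$.  Substituting $X_j = \pm X_\mathsf{i}$ into the displayed identity gives $(g_*\circ f_*)(a) = \pm 2 X_\mathsf{i} a$ and $(f_*\circ g_*)(b) = \pm 2X_\mathsf{i} b$ with the sign depending only on $\mathsf{i}$, as claimed.

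The main obstacle is making the last step --- ``$X_\mathsf{i} = \pm X_j$ on $H_{Lee}$'' --- precise and correctly signed.  The cleanest route is to observe that the element $X_\mathsf{i} - X_{\mathsf{i}'}$ acts as zero on $H_{Lee}(D)$ whenever $e_\mathsf{i}$ and $e_{\mathsf{i}'}$ are adjacent at \emph{some} crossing, by the same Hedden--Ni argument run at that crossing (diagonal edges) together with the observation that for non-diagonal adjacent edges the relation $X_\mathsf{i} = X_{\mathsf{i}'}$ or $X_\mathsf{i} + X_{\mathsf{i}'} = 0$ can be extracted from the local picture; chaining such relations around the knot diagram --- which is connected --- propagates a single sign-twisted identification $X_\mathsf{i} = \varepsilon_\mathsf{i} X_j$ with $\varepsilon_\mathsf{i} \in \{\pm 1\}$ to every edge.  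One must check consistency of the signs around the diagram, i.e.\ that this coloring is well defined, which is exactly the content of Lee's checkerboard/orientation coloring argument; invoking that result (already cited in the excerpt) disposes of the obstacle, and the corollary follows.
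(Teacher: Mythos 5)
Your proposal is correct and matches the paper's argument, which literally consists of the single line ``putting the previous two lemmas together'': the Hedden--Ni homotopy makes $X_j+X_k$ act as zero on homology, so $(X_j-X_k)$ becomes $2X_j=-2X_k$, and the extension to arbitrary $\mathsf{i}$ uses the standard fact (which the paper also invokes without proof, in the note after the next lemma) that all the $X_\mathsf{i}$ agree up to sign on $H_{Lee}$ of a knot, obtained by chaining the diagonal Hedden--Ni relations along the connected diagram. Your detour through Lee's canonical generators and the remark about non-diagonal adjacent edges are unnecessary --- consecutive edges along the knot are always diagonal at the crossing separating them, so the diagonal relations alone propagate to every edge --- but they do not affect the correctness of the argument.
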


\section{Unknotting Number Bounds and the Knight Move Conjecture}

Suppose $C$ is a $\mathbb{Q}[X]$-module. Recall that the $X$-torsion in $C$, which we will denote by $T_{X}(C)$, is given by 

\[  T_{X}(C) = \{a \in C: X^{n}a=0 \text{ for some } n \in \mathbb{N}  \} \]

\noindent
We define $\mathfrak{u}_{X}(C)$ to be the maximum order of a torsion element in $C$. 

\begin{lem}

Let $D_{+}$ and $D_{-}$ be two knot diagrams which differ at a single crossing $c$. Then 

\[ | \mathfrak{u}_{X}(H_{Lee}(D_{+})) - \mathfrak{u}_{X}(H_{Lee}(D_{-}))| \le 1 \]

\noindent
where $X$ refers to $X_{i}$ for some $i$.
\end{lem}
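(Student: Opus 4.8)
The plan is to use the crossing change maps $f$ and $g$ from the previous section together with the structure of $H_{Lee}$ over $\mathbb{Q}[X] \cong \mathbb{Q}[X_{\mathsf{i}}]$. Recall that the corollary at the end of Section 3 tells us $g_{*} \circ f_{*} = \pm 2 X_{\mathsf{i}}$ and $f_{*} \circ g_{*} = \pm 2 X_{\mathsf{i}}$ on $H_{Lee}(D_{+})$ and $H_{Lee}(D_{-})$ respectively, for any fixed $\mathsf{i}$. Since $2$ is a unit in $\mathbb{Q}$, multiplication by $2X_{\mathsf{i}}$ has exactly the same kernel and cokernel behavior as multiplication by $X_{\mathsf{i}}$, so without loss of generality I will think of the composites as $\pm X_{\mathsf{i}}$.

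First I would set $n_{+} = \mathfrak{u}_{X}(H_{Lee}(D_{+}))$ and pick a torsion element $a \in H_{Lee}(D_{+})$ realizing this maximal order, i.e. $X_{\mathsf{i}}^{n_{+}} a = 0$ but $X_{\mathsf{i}}^{n_{+}-1} a \ne 0$. The goal is to produce a torsion element in $H_{Lee}(D_{-})$ of order at least $n_{+} - 1$; by symmetry (swapping the roles of $D_{+}$ and $D_{-}$, which is legitimate since $f$ and $g$ play symmetric roles) this will give both inequalities and hence $|\mathfrak{u}_{X}(H_{Lee}(D_{+})) - \mathfrak{u}_{X}(H_{Lee}(D_{-}))| \le 1$. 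Consider $f_{*}(a) \in H_{Lee}(D_{-})$. Then $X_{\mathsf{i}}^{n_{+}} f_{*}(a) = f_{*}(X_{\mathsf{i}}^{n_{+}} a) = 0$ since $f_{*}$ is a module map (it commutes with multiplication by $X_{\mathsf{i}}$, as $X_{\mathsf{i}}$ is a central element acting the same way on both complexes — this needs a one-line check from the definition of $f$). So $f_{*}(a)$ is a torsion element of order $\le n_{+}$. I need to show its order is $\ge n_{+} - 1$. Compute $X_{\mathsf{i}}^{n_{+}-2} (g_{*} f_{*}(a)) = \pm X_{\mathsf{i}}^{n_{+}-2} \cdot X_{\mathsf{i}} \cdot a = \pm X_{\mathsf{i}}^{n_{+}-1} a \ne 0$; hence $X_{\mathsf{i}}^{n_{+}-2} f_{*}(a) \ne 0$ (if it vanished, applying $g_{*}$ would kill the previous expression). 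Therefore $f_{*}(a)$ has $X_{\mathsf{i}}$-order at least $n_{+}-1$, so $\mathfrak{u}_{X}(H_{Lee}(D_{-})) \ge n_{+} - 1 = \mathfrak{u}_{X}(H_{Lee}(D_{+})) - 1$.

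Running the identical argument with $D_{+}$ and $D_{-}$ interchanged and $g$ in place of $f$ gives $\mathfrak{u}_{X}(H_{Lee}(D_{+})) \ge \mathfrak{u}_{X}(H_{Lee}(D_{-})) - 1$, and combining the two inequalities yields the claim. One edge case to dispatch: if $\mathfrak{u}_{X}(H_{Lee}(D_{+})) \le 1$ the inequality $n_{+} - 1 \le \mathfrak{u}_X(H_{Lee}(D_-))$ is trivial (the right side is $\ge 0$), so the interesting range is $n_{+} \ge 2$, where the exponents $n_{+}-2 \ge 0$ appearing above make sense; I would note this explicitly.

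The main obstacle — really the only thing requiring care — is verifying that $f_{*}$ (and $g_{*}$) genuinely commute with multiplication by $X_{\mathsf{i}}$, so that $f_{*}(X_{\mathsf{i}}^{k} a) = X_{\mathsf{i}}^{k} f_{*}(a)$. From the formula $f(a^{0}, a^{1}) = ((X_{j} - X_{k}) a^{1}, a^{0})$ this is immediate because each component of $f$ is $R'$-linear (multiplication by $X_{j} - X_{k}$, and the identity, are both $R'$-module maps, and $X_{\mathsf{i}}$ is a central element of $R'$). So the whole argument is a short piece of commutative algebra once the corollary of Section 3 is in hand; no further geometric input is needed.
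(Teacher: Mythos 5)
Your proposal is correct and follows essentially the same route as the paper's proof: both arguments take a maximal-order torsion element $a$, use that $f_{*}$ and $g_{*}$ are module maps so orders cannot increase, and use $g_{*}\circ f_{*}=\pm 2X_{\mathsf{i}}$ (with $2$ a unit over $\mathbb{Q}$) to conclude that $f_{*}(a)$ has order at least $\operatorname{ord}(a)-1$, then symmetrize. Your version merely makes explicit two points the paper leaves implicit, namely the $R'$-linearity of $f$ and the low-order edge case.
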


\noindent
Note that up to sign, multiplication by any $X_{i}$ is the same on the Lee homology, so $\mathfrak{u}_{X_{i}}(H_{Lee})$ does not depend on the choice of $i$.

\begin{proof}

Let $a \in T_{X_{1}}(H_{Lee}(D_{+}))$, and let ord$_{X_{1}}(a)$ denote the order of $a$ with respect to $X_{1}$. Then 

\[ \text{ord}_{X_{1}}(a) \ge \text{ord}_{X_{1}}(f_{*}(a)) \ge \text{ord}_{X_{1}}(g_{*}(f_{*}(a)))    \]

\noindent
Since $g_{*}(f_{*}(a))=\pm 2X_{1}a$ and we're working over $\mathbb{Q}$, we get $\text{ord}_{X_{1}}(g_{*}(f_{*}(a)))=\text{max}(\text{ord}_{X_{1}}(a)-1, 0)$. This gives 

\[ \text{ord}_{X_{1}}(a)-1 \le \text{ord}_{X_{1}}(f_{*}(a)) \]

\noindent
so $ \mathfrak{u}_{X_{1}}(H_{Lee}(D_{+})) - \mathfrak{u}_{X_{1}}(H_{Lee}(D_{-})) \le 1$. The reverse inequality is obtained by starting with $b$ in $T_{X_{1}}(H_{Lee}(D_{-}))$ and applying $f_{*} \circ g_{*}$.

\end{proof}

\begin{thm}

For any knot $K$, $\mathfrak{u}_{X}(K)$ gives a lower bound for the unknotting number of $K$.

\end{thm}

\begin{proof}

This follows immediately from the previous lemma together with the observation that $H_{Lee}(Unknot) = \mathbb{Q}[t] \oplus \mathbb{Q}[t]$, so $\mathfrak{u}_{X}(Unknot)=0$.

\end{proof}

To translate this result back to the Lee spectral sequence, we use the fact that $X^{2}=t$. It follows that 

\[  \left \lceil{\mathfrak{u}_{X}(K)/2 }\right \rceil = \mathfrak{u}_{t}(K)\]

\noindent
where $\left \lceil{x }\right \rceil$ is the ceiling of $x$. The Lee spectral sequence collapses at the $E_{2}$ page if and only if $\mathfrak{u}_{t}(K)=1$.

\begin{thm}

If $K$ is a knot with $u(K) \le 2$ and $K$ is not the unknot, then the Lee spectral sequence for $K$ collapses at the $E_{2}$ page.

\end{thm}

\begin{proof}

By the previous theorem, $\mathfrak{u}_{X}(K) \le 2$, so $\mathfrak{u}_{t}(K) \le 1$. Since Khovanov homology detects the unknot, we know that $\mathfrak{u}_{t}(K) \ne 0$. The theorem follows.

\end{proof}

\begin{cor}

The Knight Move Conjecture is true for all knots $K$ with $u(K) \le 2$.

\end{cor}

\bibliographystyle{alpha}

\bibliography{sample}

\end{document}